\date{}
\newtheorem{theorem}{Theorem}[section]
\newtheorem{lemma}[theorem]{Lemma}
\newtheorem{remark}[theorem]{Remark}
\newtheorem{proposition}[theorem]{Proposition}
\newtheorem{definition}[theorem]{Definition}
\title{Differential calculus on Hopf--Galois extension via the Durdević braiding}
\author{Arnab Bhattacharjee \thanks{Mathematical Institute of Charles University, email: arnabbhatta7@gmail.com\\ \\2020 \emph{Mathematics Subject Classification.} Primary: 16T05; Secondary: 58B32, 81R50\\ \emph{Key words and phrases}.
Hopf–Galois extensions, principal comodule algebras, Durdević braiding,
noncommutative differential calculus.\\ The author acknowledges support from HORIZON-MSCA-2021-SE-01-CaLIGOLA and also from COST Action CaLISTA CA21109.}}
\begin{document}

\maketitle

\begin{abstract}
We introduce a class of right $H$--covariant first--order differential
calculi on principal comodule algebras generated by the Durdević
braiding $\sigma$ and a chosen vertical ideal.
Starting from the universal calculus, a strong connection, and a right
$H$--colinear splitting map, we construct $\sigma$--generated
differential calculi and prove their existence for arbitrary principal
comodule algebras. We show that, in this framework, universal vertical
maps and connection $1$--forms descend naturally to the quotient
calculus under suitable compatibility conditions. We further develop a
functorial formulation of $\sigma$--generated calculi and establish a
universal factorization property for the associated quotient calculi.
Finally, we present explicit examples arising from quantum projective
spaces and quantum lens spaces.
\end{abstract}

\tableofcontents

\section{Introduction}

Noncommutative differential geometry provides a natural framework for studying
geometric structures on quantum spaces arising from Hopf algebras and quantum
groups.
A central class of examples is furnished by quantum principal bundles, which
are algebraic counterparts of classical principal fiber bundles and are
modelled by Hopf--Galois extensions.
Since the foundational work of Brzeziński--Majid \cite{brzezinski1993quantum}, Hajac \cite{hajac1996strong}, Brzeziński--Hajac \cite{brzezinski2004chern} such extensions equipped
with strong connections have been recognized as appropriate noncommutative
analogues of principal bundles, supporting gauge theory, Chern--Connes
pairings, and index-theoretic constructions
\cite{dabrowski2001strong, montgomery2009hopf}.

In this framework, differential calculi play a fundamental role.
They encode infinitesimal geometric data and provide the setting for
connections, curvature, and gauge transformations.
Given an algebra $A$, every first--order differential calculus can be realized
as a quotient of the universal calculus $\Omega^1_{\mathrm{u}}(A)$ by a suitable
$A$--subbimodule \cite{woronowicz1989differential}.
For Hopf--Galois extensions, additional covariance and compatibility conditions
are required in order to reflect the underlying bundle structure.

An important contribution to this subject is due to Durdević, who introduced
a braided symmetry on quantum principal bundles in terms of a canonical
braiding on the balanced tensor product $A\otimes_{A^{\operatorname{co}H}}A$
\cite{durdevic1996quantum2}.
This braiding, now known as the Durdević braiding, governs the interaction
between horizontal and vertical directions and plays a central role in the
description of gauge transformations and braided symmetries
\cite{aschieri2023braided, aschieri2024braided}. 

Building on this idea, Durdević introduced the notion of a complete differential
calculus on a quantum principal bundle, consisting of a graded differential
algebra equipped with an extended coaction and satisfying strong compatibility
conditions \cite{durdevic1994geometry, durdevic1996geometry1}.
In this setting, the noncommutative Atiyah sequence is exact in all degrees and
admits a canonical horizontal--vertical decomposition.
More recently, this framework has been further analysed and refined in
\cite{del2025dhurdjevic}, where structural properties and applications were
investigated in detail.

While complete differential calculi provide a powerful and geometrically rich
theory, their construction requires substantial higher--order data and strong
regularity assumptions.
In many situations, however, one is primarily interested in first--order
structures and in the behaviour of connection forms and vertical maps.
This motivates the search for weaker frameworks that retain the essential
braided and descent properties while avoiding the full complexity of complete
calculi.

The purpose of this paper is to introduce and study a class of first--order
differential calculi on principal comodule algebras that are generated by the
Durdević braiding and a chosen vertical ideal.
More precisely, given a principal right $H$--comodule algebra $A$ with a strong
connection and a right ideal $I_H\subseteq H^+= \ker \varepsilon$, we consider differential calculi
whose defining relations are generated by the image of the associated universal
connection $1$--form and closed under the action of the Durdević braiding.
We refer to such calculi as \emph{$\sigma$--generated}, where $\sigma$ denotes here the Durdević braiding.

The notion of $\sigma$--generation isolates a minimal first--order structure
underlying the vertical geometry of Hopf--Galois extensions.
It ensures that connection $1$--forms and vertical maps descend from the
universal calculus to the quotient calculus and are compatible with the
intrinsic braided symmetry (parallel treatment can be found in \cite{del2025dhurdjevic}, Lemma 4.7).
At the same time, no horizontal--vertical decomposition or higher--order
compatibility is imposed. Moreover, the resulting construction admits a natural
functorial formulation with respect to morphisms of principal right
$H$--comodule algebras compatible with the underlying braided geometric
data.

The main results of this paper may be summarized as follows.
First, we show that on any principal comodule algebra, and for any fixed right
ideal $I_H\subseteq H^+$, there exists a $\sigma$--generated differential calculus.
This provides a systematic method for constructing braided-compatible
first--order calculi from strong connections.
We further show that universal vertical maps and
connection $1$--forms descend naturally to the associated quotient
calculi under suitable compatibility conditions. In addition, we develop
a functorial framework for $\sigma$--generated calculi and establish a
universal factorization property for the resulting quotient calculi.
Finally, we present explicit examples arising from quantum projective
spaces and quantum lens spaces, showing that $\sigma$--generated
calculi naturally appear in important classes of quantum principal
bundles.

Our approach differs from the complete calculus framework in that we work
exclusively at the level of first--order forms and formulate all conditions in
terms of the universal calculus, strong connections, and the induced braiding.
This allows us to treat arbitrary principal comodule algebras and to describe
explicitly the defining relations of the resulting calculi.
From this perspective, $\sigma$--generated calculi may be viewed as first--order
approximations of the complete calculus formalism, retaining its essential
braided and descent properties while remaining applicable in greater
generality.

\textbf{The paper is organized as follows.}
In Section~2, we recall basic notions concerning
differential calculi, Hopf--Galois extensions, principal comodule
algebras, strong connections, and the Durdević braiding.
In Section~3, we introduce $\sigma$--generated differential calculi and
establish their structural, descent, and functorial properties,
including the universal factorization property of the associated
quotient calculi.
Finally, in Section~4, we present explicit examples arising from quantum
projective spaces and quantum lens spaces.


\section{Preliminaries}
We fix a field $k$. Throughout this paper, all spaces are $k$-vector spaces and all maps are $k$-linear unless otherwise stated. The tensor product of $k$-spaces is denoted by $\otimes$, and the balanced tensor product over an algebra $B$ is denoted by $\otimes_{B}$. We denote $H$ to be a Hopf algebra with coproduct $\Delta$, counit $\varepsilon$ and a bijective antipode $S$. An algebra $A$ is always assumed to be associative and unital. We use Sweedler notation for coproducts and coactions, suppressing summation symbols. In this section we follow the literature from \cite{beggs2020quantum, lecturenotesncg, durdevic1996quantum2}. 

\subsection{Universal and first--order differential calculi}

The \emph{universal first--order differential calculus} over an algebra $A$ is defined as
\[
\Omega^1_{\mathrm{u}}(A) := \ker(m:A\otimes A\to A),
\]
with universal differential $\mathrm{d}_{\mathrm{u}}:A\to\Omega^1_{\mathrm{u}}(A)$
given by
\[
\mathrm{d}_{\mathrm{u}}(a)=1\otimes a - a\otimes 1.
\]
A \emph{first--order differential calculus} over $A$ is a pair
$(\Omega^1(A),\mathrm{d})$ such that $\Omega^1(A)$ is an $A$--bimodule and
$\mathrm{d}:A\to\Omega^1(A)$ satisfies the Leibniz rule and
$\Omega^1(A)=\mathrm{span}\{a\,\mathrm{d}b : a,b\in A\}$.
Every first--order differential calculus arises as a quotient
\[
\Omega^1(A)=\Omega^1_{\mathrm{u}}(A)/N_A,
\]
for some $A$--subbimodule $N_A\subset\Omega^1_{\mathrm{u}}(A)$.

\subsection{Principal comodule algebra}

Let $H$ be a Hopf algebra and $A$ be a right $H$--comodule algebra with coaction
$\delta:A\to A\otimes H$ and is given by $\delta(a)= a_{(0)}\otimes a_{(1)}$.
The subalgebra of coinvariants is
\[
B:=A^{\operatorname{co}(H)}=\{\,b\in A \mid \delta(b)=b\otimes 1\,\}.
\]
An extension $B\subset A$ is called a \emph{Hopf--Galois extension} if the
canonical map
\[
\mathrm{can}:A\otimes_B A \longrightarrow A\otimes H,
\qquad
a\otimes_B a'\longmapsto a\,\delta(a')= aa'_{(0)}\otimes a'_{(1)},
\]
is bijective.
In this case one defines the \emph{translation map}
\[
\tau:H\longrightarrow A\otimes_B A,
\qquad
\tau(h)=\mathrm{can}^{-1}(1\otimes h),
\qquad
\tau(h)=h^{\langle 1\rangle}\otimes_B h^{\langle 2\rangle}.
\]

\begin{definition}
    A right $H$--comodule algebra $A$ is called a \emph{principal comodule algebra} 
if:
\begin{enumerate}
\item $B=A^{\operatorname{co}(H)}\subset A$ is a Hopf--Galois extension;
\item the multiplication map $\mu: B\otimes A\to A$ admits a left $B$--linear, right
$H$--colinear splitting $s: A\rightarrow B\otimes A$ such that $\mu \circ s= \mathrm{id}_{A}$.
\end{enumerate}
\end{definition}

\subsection{Vertical map and strong connection}

Let $B= A^{\mathrm{co}(H)}\subset A$ be a Hopf--Galois extension. 
\begin{definition}
The \emph{vertical map} on a universal calculus is a linear map
\[
\mathrm{ver}_{\mathrm{u}}:\Omega^1_{\mathrm{u}}(A)\longrightarrow A\otimes H^+
\]
defined by
\[
\mathrm{ver}_{\mathrm{u}}(a\otimes a')= a a'_{(0)}\otimes (a'_{(1)})^+,
\]
where $H^+=\ker\varepsilon$.

\end{definition}

\begin{definition}
A \emph{strong connection} on a Hopf-Galois extension $B= A^{\mathrm{co}(H)}\subset A$ is a linear map
\[
\ell:H\longrightarrow A\otimes A
\]
satisfying 
\begin{enumerate}
    \item $\ell(1_{H})= 1_{A}\otimes 1_{A}$;
    \item $(\mathrm{id}_{A}\otimes \delta)\circ \ell= (\ell\otimes \mathrm{id}_{H})\circ \Delta$;
    \item $\widetilde{\mathrm{can}}\circ \ell= 1_{A}\otimes \mathrm{id}_{H}$, where $\widetilde{\mathrm{can}}: A\otimes A\rightarrow A\otimes H$ be the lift of $\mathrm{can}$ to $A\otimes A$.
\end{enumerate}
\end{definition}

\begin{definition}\label{def:connection-1-form}
Let $\ell:H\to A\otimes A$ be a strong connection.
The associated \emph{connection $1$--form} is the linear map
\[
\omega:H^+\longrightarrow\Omega^1_{\mathrm{u}}(A),
\qquad
\omega(h^+)=\ell(h)-\varepsilon(h)\,1_A\otimes 1_A,
\]
where $H^+=\ker\varepsilon$, and $h^+:= h- \varepsilon(h)1_{H}$.
It satisfies the following properties:
\begin{enumerate}
\item $\mathrm{ver}_{\mathrm{u}}\circ \omega
= 1_A\otimes \mathrm{id}_{H^+}$;

\item $\delta_{\Omega^1_{\mathrm{u}}(A)}\circ \omega
= (\omega\otimes \mathrm{id}_{H})\circ \mathrm{Ad}_{R}$,
where $\delta_{\Omega^1_{\mathrm{u}}(A)}$ denotes the right $H$--coaction on
$\Omega^1_{\mathrm{u}}(A)$ and $\mathrm{Ad}_{R}$ denotes the right adjoint
coaction of $H$ on $H^+$.
\end{enumerate}
\end{definition}

By [\cite{brzezinski2004chern}, Theorem 2.5], every principal comodule algebra admits a strong
connection.

\subsection{Durdević braiding}\label{durdevic}

Let $B= A^{\mathrm{co}(H)}\subset A$ be a Hopf--Galois extension.
Using the translation map $\tau$, one defines the \emph{Durdević braiding}
\[
\sigma:A\otimes_B A \longrightarrow A\otimes_B A
\]
by
\[
\sigma(a\otimes_B a')
=
a_{(0)}a'\, \tau(a_{(1)})= a_{(0)}a'(a_{(1)})^{\langle 1 \rangle}\otimes_{B} (a_{(1)})^{\langle 2 \rangle}.
\]
\begin{proposition}[\cite{durdevic1996quantum2}, Proposition 2.1]
    Let $B= A^{\mathrm{co}(H)}\subset A$ be a Hopf-Galois extension. Then the following hold true:
    \begin{enumerate}
        \item The map $\sigma: A\otimes_{B}A\rightarrow A\otimes_{B}A$ is an isomorphism. Its inverse is
        $$\sigma^{-1}(a\otimes_{B}a')= \tau(S^{-1}(a'_{(1)}))aa'_{(0)}= S^{-1}(a'_{(1)})^{\langle 1 \rangle} \otimes_{B} S^{-1}(a'_{(1)})^{\langle 2\rangle}aa'_{(0)}$$
        \item $\sigma$ satisfies the braid relation:
        \[
(\sigma\otimes_{B}\mathrm{id})\circ(\mathrm{id}\otimes_{B}\sigma)\circ(\sigma\otimes_{B}\mathrm{id})
=
(\mathrm{id}\otimes_{B}\sigma)\circ(\sigma\otimes_{B}\mathrm{id})\circ(\mathrm{id}\otimes_{B}\sigma)
\]
on $A\otimes_B A\otimes_B A$;
\item $A$ is $\sigma$-commutative, in other words,
$$m_{A}\circ \sigma= m_{A}$$ where $m_{A}$ denotes the multiplication $A\otimes_{B}A\rightarrow A$;
\item The product map $m_A$ is compatible with braiding $\sigma$, that is

\begin{enumerate}
    \item $\sigma\circ (m_A\otimes_{B} \mathrm{id})= (\mathrm{id}\otimes_{B}m_{A})\circ (\sigma\otimes_{B}\mathrm{id})\circ (\mathrm{id}\otimes_{B}\sigma)$
    \item $\sigma\circ (\mathrm{id}\otimes_{B} m_A)= (m_A\otimes_{B}\mathrm{id})\circ (\mathrm{id}\otimes_{B}\sigma)\circ (\sigma\otimes_{B}\mathrm{id})$
\end{enumerate}
    \end{enumerate}
\end{proposition}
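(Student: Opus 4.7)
The plan is to reduce all four statements to a short list of core identities of the translation map $\tau$, each of which follows from $\mathrm{can}\circ\tau=1\otimes\mathrm{id}_H$ together with the colinearity and bimodule structure of $\mathrm{can}$. The identities I will use are: (i) \emph{absorption}, $h^{\langle 1\rangle}h^{\langle 2\rangle}=\varepsilon(h)1_A$, obtained by applying $\mathrm{id}_A\otimes\varepsilon$ to $\mathrm{can}(\tau(h))=1\otimes h$; (ii) \emph{anti-multiplicativity}, $(gh)^{\langle 1\rangle}\otimes_B(gh)^{\langle 2\rangle}=h^{\langle 1\rangle}g^{\langle 1\rangle}\otimes_B g^{\langle 2\rangle}h^{\langle 2\rangle}$; and the coaction identities (iii) $h^{\langle 1\rangle}\otimes_B (h^{\langle 2\rangle})_{(0)}\otimes(h^{\langle 2\rangle})_{(1)} = h_{(1)}^{\langle 1\rangle}\otimes_B h_{(1)}^{\langle 2\rangle}\otimes h_{(2)}$ and (iv) $(h^{\langle 1\rangle})_{(0)}\otimes_B h^{\langle 2\rangle}\otimes (h^{\langle 1\rangle})_{(1)} = h_{(2)}^{\langle 1\rangle}\otimes_B h_{(2)}^{\langle 2\rangle}\otimes S(h_{(1)})$. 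These are classical consequences of the Hopf--Galois property.

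With these tools in hand, part (3) is an immediate one-line computation: $m_A\circ\sigma(a\otimes_B a')=a_{(0)}a'(a_{(1)})^{\langle 1\rangle}(a_{(1)})^{\langle 2\rangle}=a_{(0)}a'\,\varepsilon(a_{(1)})1_A=aa'$. For part (1) I would compute $\sigma^{-1}\circ\sigma$ directly. Writing $\sigma(a\otimes_B a')=x\otimes_B y$ with $x=a_{(0)}a'(a_{(1)})^{\langle 1\rangle}$ and $y=(a_{(1)})^{\langle 2\rangle}$, identity (iii) with $h=a_{(1)}$ allows me to replace $\delta(y)$ by Sweedler components of $a_{(1)}$; the resulting expression for $\tau(S^{-1}(y_{(1)}))xy_{(0)}$ collapses to $a\otimes_B a'$ via $S^{-1}(h_{(2)})h_{(1)}=\varepsilon(h)1$ and absorption. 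The reverse composition $\sigma\circ\sigma^{-1}$ is handled symmetrically using (iv).

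For the multiplicativity relations (4), I would expand both sides of (4a) on a generic element $a\otimes_B b\otimes_B c$. The left side gives $\sigma(ab\otimes_B c)=a_{(0)}b_{(0)}c\,\tau(a_{(1)}b_{(1)})$, which by (ii) becomes
\[
a_{(0)}b_{(0)}c\,b_{(1)}^{\langle 1\rangle}a_{(1)}^{\langle 1\rangle}\otimes_B a_{(1)}^{\langle 2\rangle}b_{(1)}^{\langle 2\rangle}.
\]
Unwinding the right side $(\mathrm{id}\otimes_B m_A)\circ(\sigma\otimes_B\mathrm{id})\circ(\mathrm{id}\otimes_B\sigma)$ through the nested $\sigma$'s produces the same expression. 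Part (4b) is mirror-symmetric and needs identity (iv) at the step where $\sigma$ is pushed past $c$ from the right.

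The main obstacle is part (2), the braid relation, which involves three applications of $\sigma$ on a triple tensor and therefore the heaviest Sweedler bookkeeping. I would evaluate both composites on $a\otimes_B b\otimes_B c$ and reduce each to a common normal form built from $a_{(0)},b_{(0)},c$ multiplied with translation maps applied to $a_{(1)}$ and $b_{(1)}$ in a prescribed braided order. The key moves are (iii) to commute $\sigma$ past a coaction that appears one level deeper in the tower, and (ii) to combine intermediate appearances of $\tau$ that arise in iterated applications; absorption (i) is then used to cancel spurious $\varepsilon$-terms. A cleaner conceptual proof is possible by interpreting $\sigma$ as the canonical braiding induced on $A\otimes_B A$ by the Hopf--Galois structure, where the braid relation is a formal consequence of the hexagon axioms; but a direct Sweedler verification, while long, is purely combinatorial and does not require additional machinery.
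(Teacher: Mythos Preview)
The paper does not supply a proof of this proposition: it is quoted from \cite{durdevic1996quantum2} as a preliminary result and no argument is given in the text. There is therefore nothing in the paper to compare your attempt against.

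That said, your outline is correct and is essentially the standard route taken in the Hopf--Galois literature. The four translation-map identities you list are the right toolkit, and your verifications of (3), (1), and (4a) go through exactly as written. One small inaccuracy: your sketch of (4b) does not actually require identity (iv); expanding the right-hand side and using (iii) together with absorption is enough, as a direct computation shows. For the braid relation (2), the cleanest argument is not a raw Sweedler expansion but rather to observe that (2) is a formal consequence of the two hexagon-type identities (4a) and (4b) once one knows $\sigma(1\otimes_B a)=a\otimes_B 1$ (or, equivalently, that $\sigma$ restricted to $1\otimes_B A$ is the obvious flip). This shortcut avoids the heavy bookkeeping you anticipate and is worth mentioning alongside the direct computation.
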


\section{Differential calculus generated by the Durdević braiding}

The Durdević braiding encodes an intrinsic braided symmetry associated
to a Hopf--Galois extension and plays a fundamental role in the geometry
of quantum principal bundles. In this section we study how this braided
structure interacts with first--order differential calculi on principal
comodule algebras.

Our goal is to construct a class of right $H$--covariant differential
calculi generated by braided vertical relations arising from strong
connections and the Durdević braiding. We begin with establishing several
structural properties of the braiding and of the associated vertical
subspaces in $A\otimes_{B}A$.

\medskip

Since the Durdević braiding acts naturally on the balanced tensor
product $A\otimes_BA$, it is natural to consider subspaces that are stable under this braiding. Such subspaces will play a central role in the construction
of the associated differential calculi.

\begin{definition}
\label{def:sigma-generated}
Let $A$ be a right $H$--comodule algebra and $B=A^{\mathrm{co}(H)}\subset A$ be a Hopf--Galois extension with
structure Hopf algebra $H$. We consider $$\sigma:A\otimes_B A\longrightarrow A\otimes_B A$$ be the Durdević braiding. A subspace $V\subseteq A\otimes_{B}A$ is said to be \emph{$\sigma$--stable} if $\sigma(V)= V$.

For a subspace $W\subseteq A\otimes_B A$, denote by $\langle W\rangle_\sigma$ the smallest $\sigma$--stable subspace of $A\otimes_B A$
containing $W$. 
\end{definition}

\begin{proposition}
\label{prop:braided-sigma-generated}
Let $A$ be a right $H$--comodule algebra and $B= A^{\mathrm{co}(H)}\subset A$ be a Hopf-Galois extension with structure Hopf algebra $H$. Then for any subspace $W\subseteq A\otimes_{B}A$, the Durdević braiding $\sigma$ induces a well-defined isomorphism
\[
\bar{\sigma}:(A\otimes_B A)/\langle W\rangle_\sigma
\longrightarrow (A\otimes_B A)/\langle W\rangle_\sigma
\]
\end{proposition}

\begin{proof}

By construction, for any subspace $W\subseteq A\otimes_{B}A$, $\langle W\rangle_\sigma$ is an $\sigma$-stable subspace of $A\otimes_{B}A$, it follows that
the braiding $\sigma$ descends to a well-defined endomorphism
\[
\bar{\sigma}:(A\otimes_B A)/\langle W\rangle_\sigma
\longrightarrow (A\otimes_B A)/\langle W\rangle_\sigma.
\]
is defined by 
\[
[a\otimes_{B}a']_{\langle W\rangle_\sigma}\longmapsto [\sigma(a\otimes_{B}a')]_{\langle W\rangle_\sigma}
\]

The injectivity of $\bar{\sigma}$ follows from the fact that $\langle W\rangle_\sigma$ is an $\sigma$--stable subspace and the braiding $\sigma$ is an isomorphism. Moreover, surjectivity of $\bar{\sigma}$ follows directly from the surjectivity of the braiding $\sigma$. Therefore, $\bar{\sigma}$ is an isomorphism.
\end{proof}

Proposition~\ref{prop:braided-sigma-generated} shows that the Durdević braiding descends naturally to
quotients by $\sigma$--stable subspaces. This provides the basic
mechanism allowing braided relations to be imposed consistently at the
level of quotient spaces.

\medskip

In order to construct right $H$--covariant quotient calculi compatible
with the braided structure, it is important to understand the behaviour
of the Durdević braiding with respect to the underlying $H$--coaction.
The following lemma shows that the braiding is compatible with the
natural diagonal coaction on $A\otimes_BA$.
\begin{lemma}\label{lem:colinearitydurdevic}
Let $A$ be a right $H$--comodule algebra and $B=A^{\operatorname{co}(H)}\subset A$ be a Hopf--Galois extension with
translation map
\[
\tau:H\longrightarrow A\otimes_B A,
\qquad
\tau(h)=h^{\langle 1\rangle}\otimes_B h^{\langle 2\rangle}.
\]
Let $\sigma$ be the Durdević braiding,
then $\sigma$ is right $H$--colinear with respect to the diagonal right
$H$--coaction
\[
\delta_{A\otimes_B A}(a\otimes_B a')
=
a_{(0)}\otimes_B a'_{(0)}
\otimes
a_{(1)}a'_{(1)}.
\]
That is,
\[
\delta_{A\otimes_B A}\circ \sigma
=
(\sigma\otimes \mathrm{id}_H)\circ \delta_{A\otimes_B A}.
\]
\end{lemma}

\begin{proof}
Let $\sum_{i}a_{i}\otimes_{B} a'_{i}\in A\otimes_{B}A$, where summation is over finitely many elements $a_i, a'_i\in A$.

We first compute the left-hand side. Using the definition of $\sigma$,
\[
\sigma(a\otimes_B a')
=
a_{(0)}a'(a_{(1)})^{\langle 1\rangle}
\otimes_B
(a_{(1)})^{\langle 2\rangle}.
\]
Applying the diagonal coaction, we obtain
\begin{align*}
&
\delta_{A\otimes_B A}\bigl(\sigma(\sum_{i}a_{i}\otimes_B a'_{i})\bigr)
\\
&=
\delta_{A\otimes_B A}
\Bigl(\sum_{i}
a_{i(0)}a'_{i}(a_{i(1)})^{\langle 1\rangle}
\otimes_B
(a_{i(1)})^{\langle 2\rangle}
\Bigr)
\\
&=
\sum_{i}\Bigl(
a_{i(0)}a'_{i}(a_{i(1)})^{\langle 1\rangle}
\Bigr)_{(0)}
\otimes_B
\Bigl(
(a_{i(1)})^{\langle 2\rangle}
\Bigr)_{(0)}
\\
&\qquad\qquad\otimes
\Bigl(
a_{i(0)}a'_{i}(a_{i(1)})^{\langle 1\rangle}
\Bigr)_{(1)}
\Bigl(
(a_{i(1)})^{\langle 2\rangle}
\Bigr)_{(1)}.
\end{align*}
Since $\delta:A\to A\otimes H$ is an algebra map, this becomes
\begin{align*}
&=\sum_{i}
a_{i(0)}a'_{i(0)}
\bigl((a_{i(1)})^{\langle 1\rangle}\bigr)_{(0)}
\otimes_B
\bigl((a_{i(1)})^{\langle 2\rangle}\bigr)_{(0)}
\\
&\qquad\qquad\otimes
a_{i(1)}a'_{i(1)}
\bigl((a_{i(1)})^{\langle 1\rangle}\bigr)_{(1)}
\bigl((a_{i(1)})^{\langle 2\rangle}\bigr)_{(1)}.
\end{align*}

Now we use the standard identity satisfied by the translation map \cite[Proposition 3.6]{brzezinski1996translation}: For $h\in H$,
\begin{align}
h^{\langle 1\rangle}
\otimes_{B}
(h^{\langle 2\rangle})_{(0)}
\otimes
(h^{\langle 2\rangle})_{(1)}
&=
(h_{(1)})^{\langle 1\rangle}
\otimes_{B}
(h_{(1)})^{\langle 2\rangle}
\otimes
h_{(2)},
\\[6pt]
(h^{\langle 1\rangle})_{(0)}
\otimes_{B}
h^{\langle 2\rangle}
\otimes
(h^{\langle 1\rangle})_{(1)}
&=
(h_{(2)})^{\langle 1\rangle}
\otimes_{B}
(h_{(2)})^{\langle 2\rangle}
\otimes
S(h_{(1)}).
\end{align}
Applying this identity with $h=a_{i(1)}$, we obtain
\begin{align}
    \delta_{A\otimes_{B}A}(\sigma(\sum_{i}a_{i}\otimes_{B}a'_{i}))
    &=
   \sum_{i} a_{i(0)}a'_{i(0)}((a_{i(1)})^{\langle 1 \rangle})_{(0)}\otimes_{B} ((a_{i(1)})^{\langle 2 \rangle})_{(0)}\otimes a_{i(1)}\varepsilon(a_{i(1)})a'_{i(1)}
\end{align}
Using the compatibility condition of the coproduct of $H$ and the coaction $\delta$,
\[
a_{i(0)(0)}\otimes a_{i(0)(1)}\otimes a_{i(1)}
=
a_{i(0)}\otimes a_{i(1)}\otimes a_{i(2)},
\]
This implies, $a_{i(1)}= a_{i(2)}$ by compairing third leg of the tensor product, therefore, from $(3)$ and using the counit identity, we get, 
\begin{align*}
&
\delta_{A\otimes_B A}\bigl(\sigma(\sum_{i}a_{i}\otimes_B a'_{i})\bigr)
\\
&=
\sum_{i} a_{i(0)}a'_{i(0)}(a_{i(1)})^{\langle 1 \rangle}\otimes_{B} (a_{i(1)})^{\langle 2 \rangle}\otimes a_{i(1)}a'_{i(1)}
\end{align*}
Next we compute the right-hand side:
\begin{align*}
&
(\sigma\otimes \mathrm{id}_H)
\delta_{A\otimes_B A}(\sum_{i}a_{i}\otimes_B a'_{i})
\\
&=
(\sigma\otimes \mathrm{id}_H)
\Bigl(\sum_{i}
a_{i(0)}\otimes_B a'_{i(0)}
\otimes
a_{i(1)}a'_{i(1)}
\Bigr)
\\
&=\sum_{i}
\sigma(a_{i(0)}\otimes_B a'_{i(0)})
\otimes
a_{i(1)}a'_{i(1)}
\\
&=\sum_{i}
a_{i(0)(0)}a'_{i(0)}
(a_{i(0)(1)})^{\langle 1\rangle}
\otimes_B
(a_{i(0)(1)})^{\langle 2\rangle}
\otimes
a_{i(1)}a'_{i(1)}.
\end{align*}
Again using the compatibility condition of the coproduct of $H$ and the coaction $\delta$,
\[
a_{i(0)(0)}\otimes a_{i(0)(1)}\otimes a_{i(1)}
=
a_{i(0)}\otimes a_{i(1)}\otimes a_{i(2)},
\]
we obtain
\[
(\sigma\otimes \mathrm{id}_H)
\delta_{A\otimes_B A}(\sum_{i}a_{i}\otimes_B a'_{i})
=\sum_{i}
a_{i(0)}a'_{i(0)}
(a_{i(1)})^{\langle 1\rangle}
\otimes_B
(a_{i(1)})^{\langle 2\rangle}
\otimes
a_{i(1)}a'_{i(1)}.
\]
Hence
\[
\delta_{A\otimes_B A}\circ \sigma
=
(\sigma\otimes \mathrm{id}_H)\circ \delta_{A\otimes_B A}.
\]
Therefore $\sigma$ is right $H$--colinear.
\end{proof}

We now establish several technical results relating the vertical map,
the canonical Hopf--Galois map, and the connection $1$--form associated
to a strong connection. These results will be used in the construction
of $\sigma$--generated differential calculi.

Let $B=A^{\mathrm{co}(H)}\subset A$ is a Hopf--Galois extension. 
Denote by
\[
\pi:A\otimes A\longrightarrow A\otimes_B A
\]
the canonical quotient map.

\medskip

\begin{lemma}\label{lem:ver-can-identification}
Let $A$ be a right $H$--comodule algebra and $B= A^{\mathrm{co}(H)}\subset A$ be a Hopf--Galois extension with structure Hopf algebra $H$. Let $I_H\subseteq H^+$ be a right ideal.
Let
\[
\mathrm{ver}_{\mathrm{u}}:\Omega^1_{\mathrm{u}}(A)\longrightarrow A\otimes H^+
\]
be the vertical map, and
\[
\mathrm{can}:A\otimes_B A\longrightarrow A\otimes H
\]
the Hopf--Galois canonical map.
Then
\[
\pi\big(\mathrm{ver}_{\mathrm{u}}^{-1}(A\otimes I_H)\big)
\;=\;
\mathrm{can}^{-1}(A\otimes I_H)
\;\subset\;
A\otimes_B A .
\]
where $\mathrm{ver}_{\mathrm{u}}^{-1}(A\otimes I_{H})=\lbrace x\in \Omega^1_{\mathrm{u}}(A): \mathrm{ver}_{\mathrm{u}}(x)\in A\otimes I_{H}\rbrace$.
\end{lemma}

\begin{proof}
Let $\mathrm{pr}_{H^+}:H\to H^+$ denote the canonical projection
$\mathrm{pr}_{H^+}(h)=h-\varepsilon(h)1_{H}$.
By definition of the vertical map one has
\[
\mathrm{ver}_{\mathrm{u}}
=
(\mathrm{id}\otimes \mathrm{pr}_{H^+})\circ \mathrm{can}\circ \pi 
\tag{4}
\]

Let $x\in\Omega^1_{\mathrm{u}}(A)$.
Since $\Omega^1_{\mathrm{u}}(A)=\ker(m)$, it follows that
\[
(\mathrm{id}\otimes\varepsilon)\big(\mathrm{can}(\pi(x))\big)=0
\tag{5}
\]
Indeed, for $x=a'\,\mathrm{d}_{\mathrm{u}}a$ one has
\(
\mathrm{can}(\pi(x))
=
a'a_{(0)}\otimes a_{(1)}-a'a\otimes 1
\),
and applying $\mathrm{id}\otimes\varepsilon$ yields zero.

Combining $(4)$ and $(5)$ gives
\[
\mathrm{ver}_{\mathrm{u}}(x)=\mathrm{can}(\pi(x)).
\tag{6}
\]

Since $I_H\subseteq H^+$, equality $(6)$ implies
\[
\mathrm{ver}_{\mathrm{u}}(x)\in A\otimes I_H
\quad\Longleftrightarrow\quad
\mathrm{can}(\pi(x))\in A\otimes I_H .
\]

Applying $\pi$ and using the bijectivity of $\mathrm{can}$ yields
\[
\pi\big(\mathrm{ver}_{\mathrm{u}}^{-1}(A\otimes I_H)\big)
=
\mathrm{can}^{-1}(A\otimes I_H),
\]
as claimed.
\end{proof}

\begin{lemma}
\label{rem:connection-in-vertical}
Let $A$ be a principal right $H$--comodule algebra and $B= A^{\mathrm{co}(H)} \subset A$ be a Hopf--Galois extension with structure Hopf algebra $H$. Let $I_H \subseteq H^+$ be a right ideal of $H$.
Let $\omega:H^+\to\Omega^1_{\mathrm{u}}(A)$ be the connection
$1$-form associated to a strong connection on $A$.
Then
\[
\pi(\omega(I_H)) \subset 
\mathrm{can}^{-1}(A\otimes I_H).
\]
\end{lemma}

\begin{proof}
For any $h^+\in I_H$, the defining property of a connection 1-form gives
\[
\mathrm{ver}_{\mathrm{u}}(\omega(h^+)) = 1\otimes h^+ \in A\otimes I_H.
\]
Hence $\omega(h^+)\in \mathrm{ver}_{\mathrm{u}}^{-1}(A\otimes I_H)$, and applying the
canonical quotient map $\pi$ yields
\[
\pi(\omega(h^+))\in \pi\big(\mathrm{ver}_{\mathrm{u}}^{-1}(A\otimes I_H)\big).
\]
Therefore, by the Lemma~\ref{lem:ver-can-identification} yields
\[
\pi(\omega(h^+))\in \mathrm{can}^{-1}(A\otimes I_H).
\]
Since $h^+\in I_H$ was arbitrary, the claim follows.
\end{proof}

Lemma~\ref{rem:connection-in-vertical} shows that the image of the connection $1$--form determines a
distinguished vertical subspace inside $A\otimes_{B}A$ naturally associated to the chosen right ideal $I_{H}\subseteq H^+$.
The next step is to study the behaviour of the corresponding
$\sigma$--stable closure under the right $H$--coaction.

\medskip

It is clear from the Lemma~\ref{rem:connection-in-vertical} that $\pi(\omega(I_{H}))\subset \mathrm{can}^{-1}(A\otimes I_H)$. Therefore, by Definition~\ref{def:sigma-generated}, $\langle \pi(\omega(I_{H}))\rangle_{\sigma}$ is the smallest $\sigma$--stable subspace of $A\otimes_{B}A$ containing the subspace $\pi(\omega(I_{H}))$.

\begin{lemma}\label{lem:Nbalcovariant}
Let $A$ be a principal right $H$--comodule algebra. We consider $B= A^{\mathrm{co}(H)}\subset A$ be a Hopf-Galois extension and let
$I_H\subseteq H^+$ be a right ideal invariant under the right adjoint
coaction of $H$.
Let $\omega:H^+\to\Omega^1_{\mathrm{u}}(A)$ be the connection $1$--form
associated to a strong connection on $A$.
Then $\langle \pi(\omega(I_H))\rangle_\sigma
\subset A\otimes_{B}A$ is invariant under the right $H$--coaction, that is,
\[
\delta_{A\otimes_{B}A}(\langle \pi(\omega(I_H))\rangle_\sigma
)
\subset
\langle \pi(\omega(I_H))\rangle_\sigma
\otimes H.
\]
\end{lemma}

\begin{proof}
By Definition~\ref{def:connection-1-form}, the connection $1$--form $\omega$ satisfies
\[
\delta_{\Omega^1_{\mathrm{u}}(A)}\circ\omega
=
(\omega\otimes\mathrm{id})\circ\mathrm{Ad}_R,
\]
where $\delta_{\Omega^1_{\mathrm{u}}(A)}$ denotes the right $H$--coaction on $\Omega^1_{\mathrm{u}}(A)$ and
$\mathrm{Ad}_R$ is the right adjoint coaction of $H$.

Since $I_H$ is invariant under $\mathrm{Ad}_R$, we have
\[
\mathrm{Ad}_R(I_H)\subset I_H\otimes H.
\]
Hence,
\[
\delta_{\Omega^1_{\mathrm{u}}(A)}(\omega(I_H))
=
(\omega\otimes\mathrm{id})(\mathrm{Ad}_R(I_H))
\subset
\omega(I_H)\otimes H.
\]
Therefore, $\omega(I_H)$ is a right $H$--subcomodule of $\Omega^1_{\mathrm{u}}(A)$.

Let
\[
\pi:\Omega^1_{\mathrm{u}}(A)\longrightarrow A\otimes_{B}A
\]
be the canonical projection.
Since $\pi$ is right $H$--colinear, it follows that
\[
\delta_{A\otimes_{B} A}(\pi(\omega(I_H)))
=
(\pi\otimes\mathrm{id})(\delta_{\Omega^1_{\mathrm{u}}(A)}(\omega(I_H)))
\subset
\pi(\omega(I_H))\otimes H.
\]
Thus, $\pi(\omega(I_H))$ is a right $H$--subcomodule of
$A\otimes_{B}A$.

Moreover, by Lemma~\ref{lem:colinearitydurdevic}, the Durdević braiding $\sigma$ is right $H$--colinear.
Consequently, for all $x\in A\otimes_{B}A$,
\[
\delta_{A\otimes_{B}A}(\sigma(x))
=
(\sigma\otimes\mathrm{id}_{H})(\delta_{A\otimes_{B}A}(x)).
\]
It follows that the right $H$--coaction preserves the $\sigma$--stable
subspace generated by $\pi(\omega(I_H))$.

Since $\langle \pi(\omega(I_H))\rangle_\sigma$ is, by definition, the smallest $\sigma$--stable
subspace containing $\pi(\omega(I_H))$, we conclude that
\[
\delta_{A\otimes_{B}A}(\langle \pi(\omega(I_H))\rangle_\sigma
)
\subset
\langle \pi(\omega(I_H))\rangle_\sigma
\otimes H.
\]
This completes the proof.
\end{proof}

\subsection{$\sigma$--generated differential calculi}

We now introduce the main class of differential calculi considered in
this paper. Roughly speaking, these calculi are generated by the
braided vertical relations arising from the image of the connection
$1$--form together with the Durdević braiding. The resulting quotient
calculi provide first--order differential structures naturally adapted
to the braided geometry of principal comodule algebras.

\medskip

Let $A$ be a principal right $H$--comodule algebra and $B= A^{\mathrm{co}(H)}\subset A$ be a Hopf-Galois extension. For every subspace $N\subseteq A\otimes A$, we define the \emph{balanced image} of $N\subseteq A\otimes A$ under the cannonical quotient map $\pi$ by
\[
N^{\mathrm{bal}}
\;:=\;
\pi(N)
\subset\;
A\otimes_B A.
\]

The following definition formalizes the idea that the defining vertical
relations of the calculus should be generated by the $\sigma$--stable
closure of the image of the connection $1$--form associated to the
chosen ideal $I_H\subseteq H^+$.
\begin{definition}\label{def:sigma-generatedcalculi}
Let $A$ be a principal right $H$-comodule algebra, let
$I_H\subseteq H^+$ be a fixed right ideal of $H$, and let
\[
\Omega^1(A)=\Omega^1_{\mathrm{u}}(A)/N_A
\]
be a right $H$-covariant first-order differential calculus on $A$, where $N_{A}\subset \Omega^1_{\mathrm{u}}(A)$ be a right $H$-covariant $A$-subbimodule satisfying
\[
\mathrm{ker}\,\pi\subset N_A
\]
The calculus $\Omega^1(A)$ is called \emph{$\sigma$-generated with respect to
$I_H$} if there exists a strong connection 
$\ell:H\to A\otimes A$ with associated connection $1$-form $\omega$
such that
\[
N_A^{\mathrm{bal}}
:=
A\cdot \langle \pi(\omega(I_H)) \rangle_\sigma \cdot A ,
\qquad \text{where  }
N_A^{\mathrm{bal}}:=\pi(N_A).
\]
\end{definition}

Having introduced the notion of a $\sigma$--generated calculus, the
first natural question is whether such calculi exist for arbitrary
principal comodule algebras. The following proposition provides a
general existence result under the presence of an equivariant splitting
of the canonical short exact sequence.
\begin{proposition}\label{prop:existence-sigma-generated}
Let $A$ be a principal right $H$--comodule algebra. We consider $B= A^{\mathrm{co}(H)}\subset A$ be a Hopf-Galois extension and let
$I_H\subseteq H^+$ be a right ideal invariant under the right adjoint
coaction of $H$.
Assume that there exists a right $H$--colinear map
\[
s:A\otimes_{B}A\longrightarrow \Omega^1_{\mathrm{u}}(A)
\]
such that $\pi\circ s=\mathrm{id}$, where
$\pi:\Omega^1_{\mathrm{u}}(A)\to A\otimes_{B}A$ is the canonical
quotient map.
Then there exists a right $H$--covariant first--order differential calculus
\[
\Omega^1(A)=\Omega^1_{\mathrm{u}}(A)/N_A
\]
on $A$ which is $\sigma$--generated with respect to $I_H$.
\end{proposition}

\begin{proof}

Consider the short exact sequence of right $H$--comodules
\[
0\longrightarrow A\Omega^1_{\mathrm{u}}(B)A
\longrightarrow \Omega^1_{\mathrm{u}}(A)
\xrightarrow{\;\pi\;}
A\otimes_{B}A
\longrightarrow 0.
\]
By assumption, this sequence admits a right $H$--colinear splitting $s$.

Define
\[
N_A:= A \cdot s(\langle \pi(\omega(I_H))\rangle_\sigma)\cdot A +  A\Omega^1_{\mathrm{u}}(B)A
\subset \Omega^1_{\mathrm{u}}(A).
\]
Since by assumption $s$ is right $H$--colinear, and by Lemma~\ref{lem:Nbalcovariant}, $\langle \pi(\omega(I_H))\rangle_\sigma$ is a right
$H$--subcomodule, it follows that $s(\langle \pi(\omega(I_H))\rangle_\sigma)$ is right $H$--subcomodule, and hence,
$A \cdot s(\langle \pi(\omega(I_H))\rangle_\sigma)\cdot A$ is a right $H$--subcomodule. Explicitly, it is given by 

\[
\delta_{A\cdot s(\langle \pi(\omega(I_H))\rangle_\sigma)\cdot A}:
A\cdot s(\langle \pi(\omega(I_H))\rangle_\sigma)\cdot A
\longrightarrow
A\cdot s(\langle \pi(\omega(I_H))\rangle_\sigma)\cdot A \otimes H,
\]
defined by
\[
a m a'
\longmapsto
a_{(0)}m_{(0)}a'_{(0)}
\otimes
a_{(1)}m_{(1)}a'_{(1)},
\]
for all $a,a'\in A$ and $m\in s(\langle \pi(\omega(I_H))\rangle_\sigma)$.

Moreover, $A\Omega^1_{\mathrm{u}}(B)A$ is a right
$H$--subcomodule, therefore, $N_A$ is a right $H$--subcomodule of
$\Omega^1_{\mathrm{u}}(A)$.

 Since both $ A \cdot s(N_A^{\mathrm{bal}})\cdot A$ and $A\Omega^1_{\mathrm{u}}(B)A$ are $A$--subbimodules of $\Omega^1_{\mathrm{u}}(A)$. It follows that, $N_A$ is an $A$--subbimodule of $\Omega^1_{\mathrm{u}}(A)$. Moreover, $\mathrm{ker}\, \pi= A\Omega^1_{\mathrm{u}}(B)A\subset N_A$.
 
 Therefore, we define the quotient
\[
\Omega^1(A):=\Omega^1_{\mathrm{u}}(A)/N_A.
\]

Because $N_A$ is a right $H$--covariant $A$--subbimodule, the quotient
$\Omega^1(A)$ inherits a right $H$--coaction and hence is a right
$H$--covariant first--order differential calculus on $A$.

Finally, we know that $\pi$ is an $A$--bimodule map, and $\pi\circ s=\mathrm{id}$, therefore, by construction,
\[
\pi(N_A)=A\cdot \langle \pi(\omega(I_H))\rangle_\sigma \cdot A
= N_A^{\mathrm{bal}},
\]
which shows that $\Omega^1(A)$ is $\sigma$--generated with respect to
$I_H$.
This completes the proof.
\end{proof}

\begin{remark}
The existence of a right $H$--colinear splitting map
\[
s:A\otimes_B A\longrightarrow \Omega^1_{\mathrm{u}}(A)
\]
means that the short exact sequence of right $H$--comodules
\[
0\longrightarrow A\Omega^1_{\mathrm{u}}(B)A
\longrightarrow \Omega^1_{\mathrm{u}}(A)
\xrightarrow{\;\pi\;}
A\otimes_B A
\longrightarrow 0
\]
splits equivariantly.

Such equivariant splittings are closely related to the existence of
horizontal lifts and strong connections on Hopf--Galois extensions in the
sense of Brzezi\'nski--Majid and Hajac
\cite{brzezinski1998coalgebra,hajac1996strong}.

Examples where such equivariant structures naturally occur include
trivial and cleft Hopf--Galois extensions, crossed products, and strongly
graded principal comodule algebras
\cite{bohm2006cleft,lecturenotesncg,brzezinski1998coalgebra}.
\end{remark}

One of the fundamental geometric structures associated to a
Hopf--Galois extension is the vertical map $\mathrm{ver}_{\mathrm{u}}$.
We now show that, under a natural compatibility condition, this map
descends from the universal calculus to the associated
$\sigma$--generated quotient calculus.

\begin{proposition}\label{prop:verdescent}
Let $A$ be a principal right $H$--comodule algebra and $B= A^{\mathrm{co}(H)}\subset A$ be a Hopf-Galois extension. Let
$I_H\subseteq H^+$ be a right ideal invariant under the right adjoint coaction of $H$.
Assume that there exists a right $H$--colinear splitting map
\[
s:A\otimes_BA\to\Omega^1_{\mathrm{u}}(A)
\]
such that
\[
\mathrm{ver}_{\mathrm{u}}
\Bigl(
s(\langle\pi(\omega(I_H))\rangle_\sigma)
\Bigr)
\subseteq
A\otimes I_H.
\]

Then the vertical map $\mathrm{ver}_{\mathrm{u}}$ descends to a well--defined surjective
map
\[
\mathrm{ver}:
\Omega^1(A)\to A\otimes H^+/I_H,
\]
where
\[
\Omega^1(A)=\Omega^1_{\mathrm{u}}(A)/N_A
\]
is the corresponding $\sigma$--generated differential calculus.
\end{proposition}

\begin{proof}
By the construction in the proof of the Proposition~\ref{prop:existence-sigma-generated}, we define
\[
N_A
:=
A\cdot s(\langle\pi(\omega(I_H))\rangle_\sigma)\cdot A
+
A\Omega^1_{\mathrm{u}}(B)A.
\]
It follows that $\Omega^1(A)= \Omega^1_{\mathrm{u}}(A)/N_A$ is an $\sigma$--generated differential calculus.

\medskip

In order to show that $\mathrm{ver_{\mathrm{u}}}$ descend to a well-defined map on $\Omega^1(A)$, we shall show that $\mathrm{ver}_{\mathrm{u}}(N_A)\subseteq A\otimes I_{H}$.
Since
\[
\mathrm{ver}_{\mathrm{u}}(A\Omega^1_{\mathrm{u}}(B)A)=0,
\]
it suffices to prove that
\[
\mathrm{ver}_{\mathrm{u}}
\Bigl(
A\cdot s(\langle\pi(\omega(I_H))\rangle_\sigma)\cdot A
\Bigr)
\subseteq
A\otimes I_H.
\]

Let
\[
m\in s(\langle\pi(\omega(I_H))\rangle_\sigma),
\qquad
a,a'\in A.
\]

By assumption,
\[
\mathrm{ver}_{\mathrm{u}}(m)\in A\otimes I_H.
\]

 Let us consider
\[
m= b'\mathrm{d}_{\mathrm{u}}b\in \Omega^1_{\mathrm{u}}(A).
\]

Then
\begin{align*}
\mathrm{ver}_{\mathrm{u}}(m)
&=
\mathrm{ver}_{\mathrm{u}}(b'\otimes b- b'b\otimes 1)
\\
&=
\mathrm{ver}_{\mathrm{u}}(b'\otimes b)- \mathrm{ver}_{\mathrm{u}}(b'b\otimes 1)
\\
&=
b'b_{(0)}\otimes b_{(1)}^{+}- b'b\otimes 1^{+}
\\
&=
b'b_{(0)}\otimes b_{(1)}^{+}.
\end{align*}

This implies that
\[
b'b_{(0)}\otimes b_{(1)}^{+}\in A\otimes I_H.
\]

Now,
\begin{align*}
\mathrm{ver}_{\mathrm{u}}(ma')
&=
\mathrm{ver}_{\mathrm{u}}(b'\otimes ba'- b'b\otimes a')
\\
&=
\mathrm{ver}_{\mathrm{u}}(b'\otimes ba')- \mathrm{ver}_{\mathrm{u}}(b'b\otimes a')
\\
&=
b'(ba')_{(0)}\otimes (ba')_{(1)}^{+}- b'ba'_{(0)}\otimes (a'_{(1)})^{+}
\\
&=
b'b_{(0)}a'_{(0)}\otimes (b_{(1)}a'_{(1)})^{+}- b'ba'_{(0)}\otimes (a'_{(1)})^{+}.
\end{align*}

Using the identity
\[
(h_1h_2)^+
=
h_1^+h_2^+
+
\varepsilon(h_1)h_2^+
+
\varepsilon(h_2)h_1^+, \quad\text{for } h_1, h_2\in H
\]
we put
\[
h_1= b_{(1)},
\qquad
h_2= a'_{(1)}.
\]
Hence we obtain
\begin{align*}
\mathrm{ver}_{\mathrm{u}}(ma')
&=
b'b_{(0)}a'_{(0)}
\otimes
\bigl(
b_{(1)}^{+}(a'_{(1)})^{+}
+
\varepsilon(b_{(1)})(a'_{(1)})^{+}
+
\varepsilon(a'_{(1)})b_{(1)}^{+}
\bigr)
\\
&\qquad
-
b'ba'_{(0)}\otimes (a'_{(1)})^{+}
\\
&=
b'b_{(0)}a'_{(0)}
\otimes
b_{(1)}^{+}(a'_{(1)})^{+}
+
b'b_{(0)}a'_{(0)}
\otimes
\varepsilon(b_{(1)})(a'_{(1)})^{+}
\\
&\qquad
+
b'b_{(0)}a'_{(0)}
\otimes
\varepsilon(a'_{(1)})b_{(1)}^{+}
-
b'ba'_{(0)}\otimes (a'_{(1)})^{+}
\\
&=
b'b_{(0)}a'_{(0)}
\otimes
b_{(1)}^{+}(a'_{(1)})^{+}
+
b'b_{(0)}\varepsilon(b_{(1)})a'_{(0)}
\otimes
(a'_{(1)})^{+}
\\
&\qquad
+
b'b_{(0)}a'_{(0)}\varepsilon(a'_{(1)})
\otimes
b_{(1)}^{+}
-
b'ba'_{(0)}\otimes (a'_{(1)})^{+}
\\
&=
b'b_{(0)}a'_{(0)}
\otimes
b_{(1)}^{+}(a'_{(1)})^{+}
+
b'ba'_{(0)}
\otimes
(a'_{(1)})^{+}
\\
&\qquad
+
b'b_{(0)}a'
\otimes
b_{(1)}^{+}
-
b'ba'_{(0)}\otimes (a'_{(1)})^{+}
\\
&=
b'b_{(0)}a'_{(0)}
\otimes
b_{(1)}^{+}(a'_{(1)})^{+}
+
b'b_{(0)}a'
\otimes
b_{(1)}^{+}.
\end{align*}

In the fourth equality we use the counit identities of the coaction:
\[
a'_{(0)}\varepsilon(a'_{(1)})= a',
\qquad
b_{(0)}\varepsilon(b_{(1)})= b.
\]

Now since
\[
\mathrm{ver}_{\mathrm{u}}(m)
=
b'b_{(0)}\otimes b_{(1)}^{+}\in A\otimes I_H,
\]
and $I_H$ is a right ideal of $H$, therefore,
\[
\mathrm{ver}_{\mathrm{u}}(ma')
=
b'b_{(0)}a'_{(0)}
\otimes
b_{(1)}^{+}(a'_{(1)})^{+}
+
b'b_{(0)}a'
\otimes
b_{(1)}^{+}
\in A\otimes I_H.
\]
 
 Now since $\mathrm{ver}_{\mathrm{u}}$ is left $A$--linear,
\[
\mathrm{ver}_{\mathrm{u}}(ama')
=
a\,\mathrm{ver}_{\mathrm{u}}(ma').
\]
Hence
\[
\mathrm{ver}_{\mathrm{u}}(ama')\in A\otimes I_H.
\]

Therefore,
\[
\mathrm{ver}_{\mathrm{u}}(N_A)\subseteq A\otimes I_H.
\]
Consequently, $\mathrm{ver}_{\mathrm{u}}$ descends to a well--defined map
\[
\mathrm{ver}:
\Omega^1(A)\to A\otimes H^+/I_H.
\]

Therefore, we have the following diagram
\begin{center}
\begin{tikzcd}
\Omega^1_{\mathrm{u}}(A) \arrow[rr, "\mathrm{ver}_{\mathrm{u}}"] \arrow[d, "q_{N_{A}}"'] &  & A\otimes H^{+} \arrow[d, "\mathrm{id}_{A}\otimes q_{I_{H}}"] \\
\Omega^1(A) \arrow[rr, "\mathrm{ver}"]                                           &  & A\otimes H^{+}/I_{H}                                        
\end{tikzcd}
\end{center}
Here, $q_{N_{A}}: \Omega^1_{\mathrm{u}}(A)\rightarrow \Omega^1(A)$ and $q_{I_H}: H^{+}\rightarrow H^{+}/I_{H}$  denote the cannonical quotient maps. We have 
\[
\mathrm{ver}(q_{N_{A}}(a\otimes a'))= (\mathrm{id}_{A}\otimes q_{I_H})\circ \mathrm{ver}_{\mathrm{u}}(a\otimes a')= aa'_{(0)}\otimes q_{I_{H}}((a'_{(1)})^{+})
\]
It is clear from the diagram that the surjectivity of the map $\mathrm{ver}$ follows from the surjectivity of the maps $\mathrm{ver}_{\mathrm{u}}$ and $q_{I_{H}}$. This completes the proof.
\end{proof}

The descent of the vertical map naturally raises the question whether
the associated connection $1$--form also descends to the quotient
calculus. The following proposition shows that this indeed holds for
$\sigma$--generated calculi.

\begin{proposition}
\label{thm:sigma-descent}
Let $A$ be a principal right $H$-comodule algebra, let
$I_H\subseteq H^+$ be a fixed right ideal, and let
\[
\Omega^1(A)=\Omega^1_{\mathrm{u}}(A)/N_A
\]
be a right $H$-covariant $\sigma$--generated first-order differential calculus on $A$ with respect to $I_H$. 
Then the connection $1$-form $\omega$ descends to a well-defined map
\[
\bar{\omega}:H^+/I_H\longrightarrow \Omega^1(A);
\]

\end{proposition}

\begin{proof}
Since $\Omega^1(A)$ is $\sigma$-generated, by definition one has
\[
N_A^{\mathrm{bal}}= A\cdot \langle \pi(\omega(I_H))\rangle_\sigma \cdot A .
\]
In particular, $\pi(\omega(h^+))\in N_A^{\mathrm{bal}}= \pi(N_A)$ for all $h^+\in I_H$. Since we know from the definition~\ref{def:sigma-generatedcalculi} that $\mathrm{ker}\, \pi\subset N_A$, this implies $[\omega(h^+)]_{\mathrm{ker}\,\pi}\in N_A/\mathrm{ker}\, \pi$, where $[\omega(h^+)]_{\mathrm{ker}\,\pi}$ denotes the corresponding quotient element of $\omega(h^+)$ in $N_A/\mathrm{ker}\, \pi$ under the cannonical quotient map $[\,\,\,]_{\mathrm{ker}\, \pi}: N_A\rightarrow N_A/\mathrm{ker}\, \pi$. It follows that $\omega(h^{+})\in N_A$ for all $h^+\in I_H$, and hence,
$\omega(I_H)\subset N_A$.
Therefore the connection $1$-form $\omega:H\to\Omega^1_{\mathrm{u}}(A)$
vanishes on $I_H$ modulo $N_A$ and hence descends uniquely to a linear map
\[
\bar{\omega}:H^+/I_H\longrightarrow \Omega^1(A),
\]
as claimed.

\end{proof}

\subsection{Functoriality of $\sigma$--generated calculi}

The construction of $\sigma$--generated calculi depends on the braided
vertical relations associated to a principal right $H$--comodule algebra,
together with the choice of a strong connection and a right
$H$--colinear splitting map. It is therefore natural to ask whether this
construction is compatible with morphisms of principal comodule
algebras preserving the underlying braided geometric data.

In this subsection we show that $\sigma$--generated calculi admit a
natural functorial formulation. More precisely, we introduce a category
whose objects consist of principal right $H$--comodule algebras equipped
with compatible connection and splitting data, and show that the
associated $\sigma$--generated calculi define a functor to the category
of right $H$--covariant first--order differential calculi.

\medskip

Let $H$ be a Hopf algebra and let $I_H\subseteq H^+$ be a fixed right ideal invariant under the right adjoint coaction of
$H$.

\medskip

We define a category $\mathcal{C}_{\sigma\text{--gen}}$ as follows.

\medskip

An object of $\mathcal{C}_{\sigma\text{--gen}}$ is a triple $(A,\omega,s_A)$, where:

\begin{enumerate}
\item $A$ is a principal right $H$--comodule algebra with $B=A^{\mathrm{co}(H)}\subset A$ a Hopf--Galois extension;

\item a strong connection $1$--form $\omega:H^+\to\Omega^1_{\mathrm{u}}(A)$;

\item a right $H$--colinear splitting map
\[
s_A:A\otimes_BA\to\Omega^1_{\mathrm{u}}(A)
\]
satisfying
\[
\pi_A\circ s_A=\mathrm{id}_{A\otimes_BA}.
\]
\end{enumerate}

For an object
\[
(A,\omega,s_A)\in\mathcal{C}_{\sigma\text{--gen}},
\]
the associated $\sigma$--generated differential calculus is defined by
\[
\Omega^1(A)
=
\Omega^1_{\mathrm{u}}(A)/N_A,
\]
where
\[
N_A
=
A\cdot
s_A(\langle\pi_A(\omega(I_H))\rangle_\sigma)
\cdot A
+
A\Omega^1_{\mathrm{u}}(B)A.
\]

\medskip

Let  $(A,\omega,s_A)$, $(A',\omega',s_{A'})$ be objects in $\mathcal{C}_{\sigma\text{--gen}}$.

\medskip

Let
\[
f:A\to A'
\]
be a right $H$--colinear algebra morphism.
Then $f$ canonically induces a morphism of universal differential
calculi
\[
f_*:\Omega^1_{\mathrm{u}}(A)\to\Omega^1_{\mathrm{u}}(A')
\]
defined by
\[
f_*(a\mathrm{d}_{\mathrm{u}}b)=f(a)\,\mathrm{d}_{\mathrm{u}}(f(b)).
\]

A morphism
\[
f\in
\mathrm{Hom}_{\mathcal{C}_{\sigma\text{--gen}}}
\bigl(
(A,\omega,s_A),
(A',\omega',s_{A'})
\bigr)
\]
is a right $H$--colinear algebra morphism
\[
f:A\to A'
\]
satisfying the following compatibility conditions:

\begin{enumerate}
\item
\[
(f\otimes_Bf)
\Bigl(
\langle\pi_A(\omega(I_H))\rangle_\sigma
\Bigr)
\subseteq
\langle\pi_{A'}(\omega'(I_H))\rangle_\sigma;
\]

\item
\[
f_*\circ s_A
=
s_{A'}\circ(f\otimes_Bf).
\]
\end{enumerate}

The second condition is expressed by the commutative diagram
\[
\begin{tikzcd}
A\otimes_BA
\arrow{r}{s_A}
\arrow{d}{f\otimes_Bf}
&
\Omega^1_{\mathrm{u}}(A)
\arrow{d}{f_*}
\\
A'\otimes_{B'}A'
\arrow{r}{s_{A'}}
&
\Omega^1_{\mathrm{u}}(A').
\end{tikzcd}
\]

\begin{proposition}\label{prop:morphism}
Let
\[
f\in
\mathrm{Hom}_{\mathcal{C}_{\sigma\text{--gen}}}
\bigl(
(A,\omega,s_A),
(A',\omega',s_{A'})
\bigr).
\]
Then $f$ induces a morphism of the corresponding
$\sigma$--generated differential calculi
\[
\Omega^1(A)\longrightarrow\Omega^1(A').
\]
\end{proposition}

\begin{proof}
Since
\[
f_*:\Omega^1_{\mathrm{u}}(A)\to\Omega^1_{\mathrm{u}}(A')
\]
is induced from the algebra morphism
\[
f:A\to A',
\]
it suffices to show that
\[
f_*(N_A)\subseteq N_{A'}.
\]

Let
\[
x\in
\langle\pi_A(\omega(I_H))\rangle_\sigma.
\]
By the first compatibility condition,
\[
(f\otimes_Bf)(x)
\in
\langle\pi_{A'}(\omega'(I_H))\rangle_\sigma.
\]

Using the second compatibility condition,
\[
f_*(s_A(x))
=
s_{A'}((f\otimes_Bf)(x)).
\]
Hence
\[
f_*(s_A(x))
\in
s_{A'}
\Bigl(
\langle\pi_{A'}(\omega'(I_H))\rangle_\sigma
\Bigr).
\]

Since $f_{*}$ is an $A$--bimodule morphism, therefore,
\[
f_*
\Bigl(
A\cdot
s_A(\langle\pi_A(\omega(I_H))\rangle_\sigma)
\cdot A
\Bigr)
\subseteq
A'\cdot
s_{A'}
(\langle\pi_{A'}(\omega'(I_H))\rangle_\sigma)
\cdot A'.
\]

Moreover,
\[
f_*(A\Omega^1_{\mathrm{u}}(B)A)
\subseteq
A'\Omega^1_{\mathrm{u}}(B')A'.
\]
Consequently,
\[
f_*(N_A)\subseteq N_{A'}.
\]

Hence $f_*$ descends to a well--defined morphism
\[
\Omega^1(A)
=
\Omega^1_{\mathrm{u}}(A)/N_A
\longrightarrow
\Omega^1_{\mathrm{u}}(A')/N_{A'}
=
\Omega^1(A').
\]
\end{proof}
Proposition~\ref{prop:morphism} shows that the construction of
$\sigma$--generated calculi is compatible with morphisms of principal
right $H$--comodule algebras preserving the braided vertical data.
Consequently, the assignment
\[
(A,\omega,s_A)
\longmapsto
\Omega^1(A)
\]
admits a natural categorical interpretation.

\begin{proposition}
 Let $\mathcal{D}$ be the category of right $H$--covariant first--order differential
calculi. Then the assignment
\[
\mathfrak{F}:
\mathcal{C}_{\sigma\text{--gen}}
\longrightarrow
\mathcal{D}
\]
defined on objects by
\[
\mathfrak{F}(A,\omega,s_A)
=
\Omega^1(A)
=
\Omega^1_{\mathrm{u}}(A)/N_A
\]
and on morphisms by
\[
\mathfrak{F}(f)
=
\overline{f_*},
\]
where
\[
\overline{f_*}:\Omega^1(A)\to\Omega^1(A')
\]
is the morphism induced by
\[
f_*:\Omega^1_{\mathrm{u}}(A)\to\Omega^1_{\mathrm{u}}(A'),
\]
defines a covariant functor.
\end{proposition}

\begin{proof}
By Proposition~\ref{prop:morphism},
every morphism
\[
f\in
\mathrm{Hom}_{\mathcal{C}_{\sigma\text{--gen}}}
\bigl(
(A,\omega,s_A),
(A',\omega',s_{A'})
\bigr)
\]
induces a well--defined morphism
\[
\overline{f_*}:
\Omega^1(A)\to\Omega^1(A').
\]
Hence $\mathfrak{F}$ is well defined on morphisms.

Let
\[
(A,\omega,s_A)\in\mathcal{C}_{\sigma\text{--gen}}.
\]
The identity morphism
\[
\mathrm{id}_A:A\to A
\]
induces the identity morphism
\[
(\mathrm{id}_A)_*
=
\mathrm{id}_{\Omega^1_{\mathrm{u}}(A)}.
\]
Therefore,
\[
\mathfrak{F}(\mathrm{id}_A)
=
\mathrm{id}_{\Omega^1(A)}.
\]

Next, let
\[
f:
(A,\omega,s_A)
\to
(A',\omega',s_{A'})
\]
and
\[
g:
(A',\omega',s_{A'})
\to
(A'',\omega'',s_{A''})
\]
be morphisms in
\[
\mathcal{C}_{\sigma\text{--gen}}.
\]
Since universal differential calculus is functorial,
\[
(g\circ f)_*
=
g_*\circ f_*.
\]
By using Proposition~\ref{prop:morphism}, we pass to the quotient calculi that yields
\[
\mathfrak{F}(g\circ f)
=
\overline{(g\circ f)_*}
=
\overline{g_*}\circ\overline{f_*}
=
\mathfrak{F}(g)\circ\mathfrak{F}(f).
\]
Hence $\mathfrak{F}$ preserves composition.

Therefore,
\[
\mathfrak{F}:
\mathcal{C}_{\sigma\text{--gen}}
\to
\mathcal{D}
\]
is a covariant functor.
\end{proof}

The functor
\[
\mathfrak F:
\mathcal C_{\sigma\text{--gen}}
\to
\mathcal D
\]
provides a categorical realization of the construction of
$\sigma$--generated calculi. In particular, it shows that the braided
quotient construction is natural with respect to morphisms of principal
right $H$--comodule algebras compatible with the chosen connection and
splitting data.

We now show that the descended vertical structure is compatible with
morphisms in the category $\mathcal{C}_{\sigma\text{--gen}}$. In particular, the vertical map associated to a $\sigma$--generated
calculus behaves naturally with respect to morphisms of principal right
$H$--comodule algebras.

\begin{proposition}
Let
\[
f:
(A,\omega,s_A)
\longrightarrow
(A',\omega',s_{A'})
\]
be a morphism in $\mathcal{C}_{\sigma\text{--gen}}$. Assume that the vertical maps
\[
\mathrm{ver}_{\mathrm{u}}:
\Omega^1_{\mathrm{u}}(A)\to A\otimes H^+,
\qquad
\mathrm{ver}'_{\mathrm{u}}:
\Omega^1_{\mathrm{u}}(A')\to A'\otimes H^+
\]
descend to well--defined maps
\[
\mathrm{ver}:
\Omega^1(A)\to A\otimes H^+/I_H,
\]
and
\[
\mathrm{ver}':
\Omega^1(A')\to A'\otimes H^+/I_H.
\]

Then the following diagram commutes:
\[
\begin{tikzcd}
\Omega^1(A)
\arrow{r}{\mathrm{ver}}
\arrow{d}{\overline{f_*}}
&
A\otimes H^+/I_H
\arrow{d}{f\otimes\mathrm{id}_{H^+/I_{H}}}
\\
\Omega^1(A')
\arrow{r}{\mathrm{ver}'}
&
A'\otimes H^+/I_H.
\end{tikzcd}
\]
Equivalently,
\[
(f\otimes\mathrm{id}_{H^+/I_{H}})
\circ
\mathrm{ver}
=
\mathrm{ver}'
\circ
\overline{f_*}.
\]
\end{proposition}

\begin{proof}
For
\[
a,b\in A,
\]
the vertical map satisfies
\[
\mathrm{ver}_{\mathrm{u}}(a\,\mathrm{d}_{\mathrm{u}}b)
=
ab_{(0)}\otimes b_{(1)}^+.
\]
Since
\[
f_*:
\Omega^1_{\mathrm{u}}(A)\to\Omega^1_{\mathrm{u}}(A')
\]
is induced by the right $H$--colinear algebra morphism
\[
f:A\to A',
\]
we obtain
\[
\begin{aligned}
\mathrm{ver}'_{\mathrm{u}}
\bigl(
f_*(a\,\mathrm{d}_{\mathrm{u}}b)
\bigr)
&=
\mathrm{ver}'_{\mathrm{u}}
\bigl(
f(a)\,\mathrm{d}_{\mathrm{u}}(f(b))
\bigr)
\\
&=
f(a)\,f(b)_{(0)}
\otimes
f(b)_{(1)}^+.
\end{aligned}
\]
Using right $H$--colinearity of $f$,
\[
\delta(f(b))
=
(f\otimes\mathrm{id}_{H})\delta(b),
\]
it follows that
\[
f(b)_{(0)}\otimes f(b)_{(1)}
=
f(b_{(0)})\otimes b_{(1)}.
\]
Hence
\[
\begin{aligned}
\mathrm{ver}'_{\mathrm{u}}
\bigl(
f_*(a\,\mathrm{d}_{\mathrm{u}}b)
\bigr)
&=
f(ab_{(0)})\otimes b_{(1)}^+
\\
&=
(f\otimes\mathrm{id}_{H^+})
\bigl(
ab_{(0)}\otimes b_{(1)}^+
\bigr)
\\
&=
(f\otimes\mathrm{id}_{H^+})
\bigl(
\mathrm{ver}_{\mathrm{u}}(a\,\mathrm{d}_{\mathrm{u}}b)
\bigr).
\end{aligned}
\]
Therefore,
\[
(f\otimes\mathrm{id}_{H^+})
\circ
\mathrm{ver}_{\mathrm{u}}
=
\mathrm{ver}'_{\mathrm{u}}
\circ
f_*.
\]

Since
\[
\mathrm{ver}_{\mathrm{u}}(N_A)\subseteq A\otimes I_H,
\qquad
\mathrm{ver}'_{\mathrm{u}}(N_{A'})\subseteq A'\otimes I_H,
\]
the vertical maps on the universal calculi induce well--defined quotient maps
\[
\mathrm{ver}:
\Omega^1(A)=\Omega^1_{\mathrm{u}}(A)/N_A
\longrightarrow
A\otimes H^+/I_H,
\]
and
\[
\mathrm{ver}':
\Omega^1(A')=\Omega^1_{\mathrm{u}}(A')/N_{A'}
\longrightarrow
A'\otimes H^+/I_H.
\]

Moreover, since by the Proposition~\ref{prop:morphism}
\[
f_*(N_A)\subseteq N_{A'},
\]
the morphism
\[
f_*:\Omega^1_{\mathrm{u}}(A)\to\Omega^1_{\mathrm{u}}(A')
\]
descends to a well--defined morphism
\[
\overline{f_*}:
\Omega^1(A)\to\Omega^1(A').
\]

Let 
\[
q_{N_{A}}: \Omega^1_{\mathrm{u}}(A)\rightarrow \Omega^1(A)
\]
and 
\[
q_{I_{H}}: H^+\rightarrow H^+/I_H
\]
be cannonical quotient maps. We consider $\eta\in \Omega^1_{\mathrm{u}}(A)$.

\medskip

Using
\[
(f\otimes\mathrm{id}_{H^+})
\circ
\mathrm{ver}_{\mathrm{u}}
=
\mathrm{ver}'_{\mathrm{u}}
\circ
f_*,
\]
we compute
\begin{align*}
&
((f\otimes \mathrm{id}_{H^+/I_H})\circ \mathrm{ver})(q_{N_{A}}(\eta))
\\
&=
(f\otimes \mathrm{id}_{H^+/I_H})
\Bigl(
(\mathrm{id}_{A}\otimes q_{I_{H}})
(\mathrm{ver}_{\mathrm{u}}(\eta))
\Bigr)
\\
&=
(\mathrm{id}_{A}\otimes q_{I_{H}})
\Bigl(
(f\otimes \mathrm{id}_{H^+})
(\mathrm{ver}_{\mathrm{u}}(\eta))
\Bigr)
\\
&=
(\mathrm{id}_{A'}\otimes q_{I_{H}})
\bigl(
\mathrm{ver'}_{\mathrm{u}}
\circ
f_{*}(\eta)
\bigr)
\\
&=
\mathrm{ver'}
\bigl(
q_{N_{A}}(f_{*}(\eta))
\bigr)
\\
&=
\mathrm{ver'}
\circ
\overline{f}_{*}(q_{N_{A}}(\eta)).
\end{align*}

Hence
\[
(f\otimes\mathrm{id}_{H^+/I_{H}})
\circ
\mathrm{ver}
=
\mathrm{ver}'
\circ
\overline{f_*}.
\]
\end{proof}

The preceding functoriality results show that $\sigma$--generated
calculi are naturally compatible with morphisms of principal right
$H$--comodule algebras. It is therefore natural to ask whether the
resulting quotient calculus admits a universal characterization among
right $H$--covariant calculi compatible with the braided vertical
relations determined by the ideal $I_H\subseteq H^+$. The following theorem shows that the associated $\sigma$--generated
calculus is universal among all admissible quotient calculi containing
the lifted braided vertical relations.

\begin{theorem}
\label{thm:universal-property}
Let $(A,\omega,s_A)$ be an object in $\mathcal{C}_{\sigma\text{--gen}}$.
We consider
\[
N_A
:=
A\cdot s_A(\langle
\pi_A(\omega(I_H))
\rangle_\sigma)\cdot A
+
A\Omega^1_{\mathrm{u}}(B)A.
\]
Then the associated $\sigma$--generated calculus
\[
\Omega^1(A)
=
\Omega^1_{\mathrm{u}}(A)/N_A
\]
is an initial object in the category of right $H$--covariant first--order differential calculi on $A$
satisfying $N_A\subseteq M$,
where $\Gamma^1(A)= \Omega^1_{\mathrm{u}}(A)/M$ is any right $H$--covariant first--order differential calculus on $A$.

More precisely, for every right $H$--covariant subbimodule $M\subset\Omega^1_{\mathrm{u}}(A)$
satisfying $N_A\subseteq M$,
there exists a unique surjective morphism
\[
\Phi:
\Omega^1(A)
\longrightarrow
\Gamma^1(A)
=
\Omega^1_{\mathrm{u}}(A)/M
\]
in the category $\mathcal{D}$ such that
\[
\Phi\circ q_{N_A}=q_M,
\]
where
\[
q_{N_A}:
\Omega^1_{\mathrm{u}}(A)\to\Omega^1(A),
\qquad
q_M:
\Omega^1_{\mathrm{u}}(A)\to\Gamma^1(A)
\]
denote the canonical quotient maps.
\end{theorem}

\begin{proof}
Let
\[
\eta\in N_A.
\]
Since
\[
N_A\subseteq M,
\]
it follows that
\[
q_M(\eta)=0.
\]
Hence
\[
N_A\subseteq\ker(q_M).
\]

Therefore, if
\[
q_{N_A}(\xi)=q_{N_A}(\xi') \quad\text{for } \xi,\xi'\in\Omega^1_{\mathrm{u}}(A)
\]
then it follows that
$\xi-\xi'\in N_A\subseteq\ker(q_M)$, since $q_{N_{A}}$ is a cannonical quotient map, so, $\mathrm{ker} (q_{N_{A}})= N_{A}$. As a consequence, we have
\[
q_M(\xi)=q_M(\xi').
\]

Thus the assignment
\[
\Phi(q_{N_A}(\xi))
:=
q_M(\xi),
\qquad
\xi\in\Omega^1_{\mathrm{u}}(A),
\]
defines a well--defined $A$--morphism
\[
\Phi:
\Omega^1(A)
=
\Omega^1_{\mathrm{u}}(A)/N_A
\longrightarrow
\Gamma^1(A)
=
\Omega^1_{\mathrm{u}}(A)/M.
\]
Since $N_A$ and $M$ are $A$--subbimodules of $\Omega^1_{\mathrm{u}}(A)$,
the quotient maps $q_{N_{A}}$ and $q_{M}$ are canonical $A$--bimodule morphisms, therefore, it follows that $\Phi$ is an $A$--bimodule morphism.
Moreover, for every $\xi\in\Omega^1_{\mathrm{u}}(A)$, we have
\[
(\Phi\circ q_{N_A})(\xi)
=
\Phi(q_{N_A}(\xi))
=
q_M(\xi).
\]
Hence
\[
\Phi\circ q_{N_A}=q_M.
\]

To show uniqueness, let
\[
\Psi:
\Omega^1(A)\to\Gamma^1(A)
\]
be another morphism satisfying
\[
\Psi\circ q_{N_A}=q_M.
\]
Then for every $\xi\in\Omega^1_{\mathrm{u}}(A)$, we obtain
\[
\Psi(q_{N_A}(\xi))
=
q_M(\xi)
=
\Phi(q_{N_A}(\xi)).
\]
Since $q_{N_A}$ is surjective, it follows that
\[
\Psi=\Phi.
\]

We shall show that $\Phi$ is surjective. Let $[\xi]_M\in\Gamma^1(A)$

\medskip

Since $q_M$ is surjective, there exists $\xi\in\Omega^1_{\mathrm{u}}(A)$
such that
\[
q_M(\xi)=[\xi]_M.
\]
Then
\[
[\xi]_M
=
q_M(\xi)
=
\Phi(q_{N_A}(\xi)),
\]
which proves that $\Phi$ is surjective.

\medskip

Finally, we shall show that $\Phi$ is a right $H$--colinear morphism.

\medskip

Since $N_A$ and $M$ are right $H$--covariant $A$--subbimodules of $\Omega^1_{\mathrm{u}}(A)$, the quotient calculi $\Omega^1(A)
=
\Omega^1_{\mathrm{u}}(A)/N_A$ and $\Gamma^1(A)
=
\Omega^1_{\mathrm{u}}(A)/M$ inherit induced right $H$--coactions
$\delta_{\Omega^1(A)}$, $\delta_{\Gamma^1(A)}$ respectively.

\medskip

Let $\xi\in\Omega^1_{\mathrm{u}}(A)$.

Then
\[
\begin{aligned}
\delta_{\Gamma^1(A)}
\bigl(
\Phi(q_{N_A}(\xi))
\bigr)
&=
\delta_{\Gamma^1(A)}
\bigl(
q_M(\xi)
\bigr)
\\
&=
(q_M\otimes\mathrm{id}_{H})
\bigl(
\delta_{\Omega^1_{\mathrm{u}}(A)}(\xi)
\bigr),
\end{aligned}
\]
since $q_M$ is right $H$--colinear.

\medskip

On the other hand,
\[
\begin{aligned}
((\Phi\otimes\mathrm{id}_{H})
\circ
\delta_{\Omega^1(A)})
(q_{N_A}(\xi))
&=
(\Phi\otimes\mathrm{id}_{H})
\bigl(
(q_{N_A}\otimes\mathrm{id}_{H})
(\delta_{\Omega^1_{\mathrm{u}}(A)}(\xi))
\bigr)
\\
&=
(q_M\otimes\mathrm{id}_{H})
(\delta_{\Omega^1_{\mathrm{u}}(A)}(\xi)),
\end{aligned}
\]
In the first equality, we use the fact that $q_{N_{A}}$ is right $H$--colinear, and in the second equality, we use $\Phi\circ q_{N_A}=q_M$.

\medskip

Therefore,
\[
\delta_{\Gamma^1(A)}
\circ
\Phi
=
(\Phi\otimes\mathrm{id}_{H})
\circ
\delta_{\Omega^1(A)}.
\]
Hence, $\Phi$ is right $H$--colinear, and therefore a morphism in the category $\mathcal D$.

\end{proof} 

Theorem~\ref{thm:universal-property} shows that the associated $\sigma$--generated calculus is
characterized by a universal factorization property among right
$H$--covariant quotient calculi compatible with the braided vertical
relations determined by the ideal $I_H\subseteq H^+$.
In this sense, $\sigma$--generated calculi provide a canonical
first--order braided differential structure naturally associated to the
underlying principal comodule algebra and its vertical geometry.

\section{Examples of $\sigma$--generated differential calculi}

In this section we illustrate the general construction of
$\sigma$--generated differential calculi in the setting of quantum
principal bundles arising from quantum projective spaces and quantum
lens spaces. These examples show that the abstract braided construction
developed in the previous section naturally appears in important classes
of noncommutative Hopf--Galois extensions.

In particular, we show that the braided vertical relations determined
by the Durdević braiding and strong connection data give rise to
explicit right $H$--covariant differential calculi on quantum unitary
groups and quantum lens spaces. The resulting calculi are naturally
adapted to the underlying quantum fibration structures.

\medskip

We begin with the quantum Hopf fibration
\[
\mathcal{O}_q(\mathbb{CP}^n)
\subseteq
\mathcal{O}_q(SU_{n+1}),
\]
which provides one of the fundamental examples of a quantum principal
bundle. We show that the general construction of
$\sigma$--generated calculi naturally produces a braided differential
calculus compatible with the geometry of quantum projective space.

\subsection{$\sigma$--generated calculi on $\mathcal{O}_q(SU_{n+1})$ over $\mathcal{O}_q(\mathbb{CP}^n)$}

Let
\[
A:=\mathcal{O}_q(SU_{n+1}),
\qquad
H:=\mathcal{O}(U(1))
=
\mathbb{C}[z,z^{-1}],
\]
and let
\[
B:=A^{\mathrm{co}(H)}
=
\mathcal{O}_q(\mathbb{CP}^n).
\]
The right $H$--coaction on $A$ is induced from the canonical
$\mathbb{Z}$--grading 
\[
A=\bigoplus_{k\in\mathbb{Z}}A_k,
\qquad
\delta(a_k)=a_k\otimes z^k,
\]
for all $a_k\in A_k$.
It is well known that $B\subset A$ is a principal right $H$--comodule algebra \cite{brzezinski1998coalgebra, hajac1996strong}.

Since
\[
H^+=\ker\varepsilon
=
\langle z-1\rangle,
\]
we fix the right ideal
\[
I_H:=\langle z-1\rangle\subseteq H^+.
\]
Moreover,
\[
\mathrm{Ad}_R(z-1)
=
(z-1)\otimes z+1\otimes(z-1)
\in I_H\otimes H,
\]
hence $I_H$ is invariant under the right adjoint coaction of $H$.

Let
\[
\ell:H\to A\otimes A
\]
be a strong connection on the quantum Hopf fibration and let
\[
\omega:H^+\to\Omega^1_{\mathrm{u}}(A)
\]
be the associated connection $1$--form
\[
\omega(h^+)
=
\ell(h)-\varepsilon(h)\,1\otimes1.
\]

Denote by
\[
\tau:H\to A\otimes_BA
\]
the translation map
\[
\tau(h)
=
\mathrm{can}^{-1}(1\otimes h).
\]
Then 
(for example, see \cite{hajac1996strong})
\[
\pi(\omega(h^+))
=
\tau(h)-\varepsilon(h)\,1\otimes_B1.
\]
In particular,
\[
\pi(\omega(z-1))
=
\tau(z)-1\otimes_B1.
\]

Define
\[
W_{I_H}
:=
\left\langle
\pi(\omega(I_H))
\right\rangle_\sigma
=
\left\langle
\tau(z)-1\otimes_B1
\right\rangle_\sigma.
\]

Since $H=\mathcal{O}(U(1))$ is cosemisimple, the category of right $H$--comodules is semisimple.
Hence the short exact sequence of right $H$--comodules \cite{brzezinski1998coalgebra, hajac1996strong}.
\[
0
\longrightarrow
\ker\pi
\longrightarrow
\Omega^1_{\mathrm{u}}(A)
\xrightarrow{\;\pi\;}
A\otimes_BA
\longrightarrow
0
\]
splits in the category of right $H$--comodules.
Therefore, there exists a right $H$--colinear splitting
\[
s:A\otimes_BA\to\Omega^1_{\mathrm{u}}(A)
\]
such that
\[
\pi\circ s=\mathrm{id}_{A\otimes_BA}.
\]

We define
\[
N_A
:=
A\cdot s(W_{I_H})\cdot A
+
A\Omega^1_{\mathrm{u}}(B)A.
\]
Then $N_A$ is a right $H$--covariant $A$--subbimodule of
$\Omega^1_{\mathrm{u}}(A)$.
Hence by the Proposition~\ref{prop:existence-sigma-generated}, the quotient
\[
\Omega^1(A)
=
\Omega^1_{\mathrm{u}}(A)/N_A
\]
defines a right $H$--covariant $\sigma$--generated first--order
differential calculus on $A=\mathcal{O}_q(SU_{n+1})$.

\begin{remark}
The element
\[
\tau(z)-1\otimes_B1
\]
encodes the vertical quantum $U(1)$--direction of the Hopf fibration.
Thus the above construction may be viewed as a braided extension of the
horizontal calculus on $\mathcal{O}_q(\mathbb{CP}^n)$ by a vertical
quantum circle relation.

Since the canonical covariant differential calculus on $\mathcal{O}_q(\mathbb{CP}^n)$ is $2n$--dimensional, the associated $\sigma$--generated calculus on $\mathcal{O}_q(SU_{n+1})$ may be naturally interpreted as a braided
$(2n+1)$--dimensional quantum fibration calculus consisting of a
horizontal quantum projective space component together with a single
vertical quantum circle direction.
\end{remark}
We next consider quantum lens spaces, which provide nontrivial finite
quotients of quantum odd spheres and form another important class of
quantum principal bundles. The following example shows that the
construction of $\sigma$--generated calculi is stable under these
noncommutative quotient constructions and extends naturally to the
associated quantum lens space fibrations.

\subsection{$\sigma$--generated calculi on quantum lens spaces}

Let
\[
A:=\mathcal{O}_q(\mathcal{L}^{(n,r)}),
\qquad
B:=\mathcal{O}_q(\mathbb{CP}^n),
\]
where
\[
\mathcal{O}_q(\mathcal{L}^{(n,r)})
=
\bigoplus_{N\in\mathbb{Z}}\mathcal{L}_{rN}
\]
denotes the coordinate algebra of the quantum lens space of dimension
$2n+1$ and index $r$ and $\mathcal{L}_{rN}$ denotes line bundle over $\mathcal{O}_q(\mathbb{CP}^n)$ (for example, see \cite{arici2016gysin, arici2016pimsner}).
The algebra inclusion 
\[
j:B\hookrightarrow A
\]
is a quantum principal bundle with structure group $H:= \mathcal{O}(\widetilde{U(1)})$, where,
\[
\widetilde{U(1)}
:=
U(1)/\mathbb{Z}_r,
\]
and
\[
B
=
A^{\mathrm{co}(H)}.
\]

\medskip

Since $\widetilde{U(1)}
\cong
U(1)$ as compact abelian Lie groups, it follows that the Hopf algebra $H=\mathcal{O}(\widetilde{U(1)})$ is cosemisimple. Hence the category of right $H$--comodules is semisimple. Therefore, there exists a right $H$--colinear splitting
\[
s:A\otimes_BA\to\Omega^1_{\mathrm{u}}(A)
\]
such that
\[
\pi\circ s=\mathrm{id}_{A\otimes_BA}.
\]

Let
\[
I_H:=\langle u-1\rangle\subseteq H^+,
\]
where
\[
H\simeq\mathbb{C}[u,u^{-1}].
\]
Since $I_H$ is invariant under the right adjoint coaction, the general existence
Proposition~\ref{prop:existence-sigma-generated} of $\sigma$--generated calculi applies. Hence the associated braided vertical
subspace
\[
W_{I_H}
=
\left\langle
\tau(u)-1\otimes_B1
\right\rangle_\sigma
\]
determines a right $H$--covariant $\sigma$--generated first--order
differential calculus on
\[
\mathcal{O}_q(\mathcal{L}^{(n,r)}).
\]

Thus the construction of $\sigma$--generated calculi extends naturally
from quantum projective fibrations to quantum lens space fibrations.
Geometrically, the resulting calculus may be viewed as a braided
extension of the canonical differential calculus on $\mathcal{O}_q(\mathbb{CP}^n)$ by a single vertical quantum circle direction associated to the lens
space fibration.

\medskip

The above examples show that $\sigma$--generated differential calculi
occur naturally in important classes of quantum principal bundles
arising in noncommutative geometry. In particular, the construction is
compatible with both quantum projective space fibrations and quantum
lens space fibrations, illustrating the flexibility of the braided
framework developed in this paper.


\section*{Acknowledgements}
The author would like to thank Thomas Weber, Emanuele Latini, Antonio Del Donno and Giovanni Gava for helpful discussions.


\bibliographystyle{plain}
\bibliography{name1}

@article{brzezinski2004chern,
  title={{The Chern--Galois character}},
  author={Brzezínski, Tomasz and Hajac, Piotr M},
  journal={Comptes rendus. Math{\'e}matique},
  volume={338},
  number={2},
  pages={113--116},
  year={2004}
}

@article{brzezinski1993quantum,
  title={{Quantum Group Gauge Theory on Quantum Spaces}},
  author={Brzezínski, Thomasz and Majid, Shahn},
  journal={Communications in Mathematical Physics},
  volume={157},
  number={3},
  pages={591--638},
  year={1993},
  publisher={Springer}
}

@article{hajac1996strong,
  title={{Strong Connections on Quantum Principal Bundles}},
  author={Hajac, Piotr M},
  journal={Communications in mathematical physics},
  volume={182},
  number={3},
  pages={579--617},
  year={1996},
  publisher={Springer}
}

@article{montgomery2009hopf,
  title={{Hopf Galois Theory: A Survey}},
  author={Montgomery, Susan},
  journal={New topological contexts for Galois theory and algebraic geometry (BIRS 2008)},
  volume={16},
  pages={367--400},
  year={2009}
}

@article{dabrowski2001strong,
  title={{Strong Connections and Chern--Connes Pairing in the Hopf--Galois Theory}},
  author={Dabrowski, Ludwik and Grosse, Harald and Hajac, M},
  journal={Communications in Mathematical Physics},
  volume={220},
  number={2},
  pages={301--331},
  year={2001},
  publisher={Springer}
}

@article{woronowicz1989differential,
  title={{Differential Calculus on Compact Matrix Pseudogroups (Quantum Groups)}},
  author={Woronowicz, Stanis{\l}aw L},
  journal={Communications in Mathematical Physics},
  volume={122},
  number={1},
  pages={125--170},
  year={1989},
  publisher={Springer}
}

@article{durdevic1996geometry1,
  title={{Geometry of Quantum Principal Bundles \textrm{I}}},
  author={Durdevi{\'c}, Mi{\'c}o},
  journal={Communications in Mathematical Physics},
  volume={175},
  number={3},
  pages={457--520},
  year={1996},
  publisher={Springer}
}

@article{durdevic1994geometry,
  title={{Geometry of Quantum Principal Bundles \textrm{II}-Extended Version}},
  author={Durdevic, Mico},
  journal={arXiv preprint q-alg/9412005},
  year={1994}
}

@article{durdevic1996quantum2,
  title={{Quantum Gauge Transformations and Braided Structure on Quantum Principal Bundles}},
  author={Durdevic, Mico},
  journal={arXiv preprint q-alg/9605010},
  year={1996}
}

@article{aschieri2024braided,
  title={{Braided Hopf Algebras and Gauge Transformations}},
  author={Aschieri, Paolo and Landi, Giovanni and Pagani, Chiara},
  journal={Mathematical Physics, Analysis and Geometry},
  volume={27},
  number={4},
  pages={23},
  year={2024},
  publisher={Springer}
}

@article{aschieri2023braided,
  title={{Braided Hopf Algebras and Gauge Transformations \textrm{II}: *-structures and examples}},
  author={Aschieri, Paolo and Landi, Giovanni and Pagani, Chiara},
  journal={Mathematical Physics, Analysis and Geometry},
  volume={26},
  number={2},
  pages={13},
  year={2023},
  publisher={Springer}
}

@book{beggs2020quantum,
  title={{Quantum Riemannian Geometry}},
  author={Beggs, Edwin J and Majid, Shahn},
  volume={355},
  year={2020},
  publisher={Springer}
}

@article{del2025dhurdjevic,
  title={{On the Đurđević Approach to Quantum Principal Bundles}},
  author={Del Donno, Antonio and Latini, Emanuele and Weber, Thomas},
  journal={Journal of Geometry and Physics},
  pages={105567},
  year={2025},
  publisher={Elsevier}
}

@incollection{lecturenotesncg,
  author    = {Brzezínski, Tomasz and George, J. and Maszczyk, T.},
  title     = {{Galois Structures}},
  editor    = {Hajac, Piotr M.},
  booktitle = {Lecture Notes on Noncommutative Geometry and Quantum Groups},
  year      = {2008},
  publisher = {Impan}
}

@article{brzezinski1996translation,
  title={{Translation Map in Quantum Principal Bundles}},
  author={Brzezínski, Tomasz},
  journal={Journal of Geometry and Physics},
  volume={20},
  number={4},
  pages={349--370},
  year={1996},
  publisher={Elsevier}
}

@article{brzezinski1998coalgebra,
  title={{Coalgebra Bundles}},
  author={Brzezínski, Tomasz and Majid, Shahn},
  journal={Communications in Mathematical Physics},
  volume={191},
  number={2},
  pages={467--492},
  year={1998},
  publisher={Springer}
}

@article{bohm2006cleft,
  title={{Cleft Extensions of Hopf Algebroids}},
  author={B{\"o}hm, Gabriella and Brzezi{\'n}ski, Tomasz},
  journal={Applied Categorical Structures},
  volume={14},
  number={5},
  pages={431--469},
  year={2006},
  publisher={Springer}
}

@article{arici2016gysin,
  title={{The Gysin Sequence for Quantum Lens Spaces}},
  author={Arici, Francesca and Brain, Simon and Landi, Giovanni},
  journal={Journal of Noncommutative Geometry},
  volume={9},
  number={4},
  pages={1077--1111},
  year={2016}
}

@article{arici2016pimsner,
  title={{Pimsner Algebras and Gysin Sequences from Principal Circle Actions}},
  author={Arici, Francesca and Kaad, Jens and Landi, Giovanni},
  journal={Journal of Noncommutative Geometry},
  volume={10},
  number={1},
  pages={29--64},
  year={2016}
}

\end{document}